\theoremstyle{definition}
\newtheorem{theorem}{Theorem}[section]
\newtheorem{definition}[theorem]{Definition}
\newtheorem{corollary}[theorem]{Corollary}
\newtheorem{lemma}[theorem]{Lemma}
\newtheorem{proposition}[theorem]{Proposition}
\newtheorem{remark}{Remark}
\newtheorem{example}[theorem]{Example}
\numberwithin{equation}{section}
\begin{document}

\title[Finsler perturbation with nondense irrational geodesics]{Finsler perturbation with nondense geodesics with irrational directions}

\author{Dmitri Burago and Dong Chen}
\address{Dmitri Burago: Department of Mathematics,  Pennsylvania State University, University Park, PA 16802, USA}
\email{burago@math.psu.edu}

\address{Dong Chen: Department of Mathematics,  Pennsylvania State University,  University Park, PA 16802, USA}
\email{dxc360@psu.edu}
\date{}

\thanks{The First author was partially supported
by NSF grant DMS-1205597. The Second author was partially 
supported by NSF grant DMS-1205597 and D. Burago's PSU Research Fund.}

\keywords {Finsler geometry, perturbation, nondense geodesics, dual lens maps,
 Aubry-Mather theory.}

\footnote{2010 \emph {Mathematics Subject Classification.} 53C23, 53C60.}

\maketitle
\begin{abstract}
 We show that given any Liouville direction and flat Finsler torus, one can  make a $C^{\infty}$-small perturbation on an arbitrarily small disc to get a nondense geodesic in the given direction.
\end{abstract}

\section{Introduction}

Let $M$ be a closed smooth manifold with universal cover $\tilde{M}$. A 
geodesic $\gamma$ in $\tilde{M}$ is called \textit{minimal} if it realizes the distance 
between any two points $p,q\in\gamma$. If all geodesics in $\tilde{M}$ are minimal, 
then we say $M$ \textit{has no conjugate points}.  In the 1940s, Hedlund and Morse \cite{HM} 
asked the following question: are Riemannian tori without conjugate points flat? A few years later, E. Hopf \cite{Hp} 
gave a positive answer in the 2-dimensional case and as it often happens, the higher dimensional version is now known as the E. Hopf Conjecture,  though seemingly Hopf never conjectured that and it apparently goes back to Hedlund and Morse.
After Hopf's result, many other people 
studied this problem under various assumptions, see e.g. \cite{CK}. 
Finally, in 1994,  almost half a 
century after Hopf's result,  the conjecture has been proven in  \cite{BI94}.

However,  as we turn our attention to Finsler manifolds,   we find a different world.  
Non-flat Finsler tori without conjugate points can be constructed by making symplectic 
(contact) perturbations of flat Riemannian metrics \cite{K} or as some metrics of revolution \cite{Z}. 
Moreover, any sufficiently small region in any Finsler surface can be isometrically embedded into some Finsler 2-torus without conjugate points \cite{C2}. This means the local structure of such tori is 
totally flexible,  contrary to the rigidity suggested in the Hopf Conjecture.

A more reasonable analog of the Hopf Conjecture in the Finsler setting is that the geodesic flow on any Finsler torus without conjugate points is smoothly conjugate to that on some flat Finsler torus. This conjecture is still open and it is equivalent to the Hopf Conjecture in the Riemannian setting \cite{CK}.
Up to a time change, this conjecture is equivalent to the smoothness of the Heber foliation \cite{Hb} for Finsler tori without conjugate points. There are two possible reasons why Heber foliation may fail to be smooth: (1) There may be one individual leaf on which the dynamics is not smoothly conjugate to a linear flow,  
or (2)  The leaves are smooth but they behave in a non-smooth way in the transverse direction.  The possibility of neither situation is known so far. In this paper we are trying to approach some understanding of situation (1).  
  
To better understand (1) and to facilitate understanding the conditions of 
Proposition \ref{mainprop1}, 
let us consider a 2-D flat Finsler torus $\mathbb{T}^2$ with 
the standard 
coordinates $(q_1, q_2, v_1, v_2)$.  Later on, we use $(p_1,p_2)$ for the momenta in the cotangent bundle so that our notations agree with those in  \cite{BCI}. However, here, for the sake of visualization,  we work in the phase space of the original Lagrangian system.
To have a nice Poincar\'{e} 
section, we use only
perturbations that do not change lengths of unit tangent vectors with, say, 
$|v_1/v_2|\geq 1$. 
This means that the metric and the flow do not change on the part of the unit tangent
bundle \{$(\mathbf q, \mathbf v)$\} constituted by the vectors 
that form angles $\leq \pi/4$ (the angle is measured with respect to the standard metric on the torus with coordinates $(\mathbf q, \mathbf v)$)  with the circles
$\{q_2=const\}$. 
Then the part $U$
of the unit tangent bundle with $\angle (\mathbf v, \frac\partial {\partial q_1}) \geq \pi/4$ is also invariant under the perturbed flow and, therefore, for sufficiently small perturbations,
the hypersurface (with boundary) \{$(q_1, 0, v_1, v_2): |v_1/v_2|\leq 1$\}
is a Poincar\'{e} section for the perturbed flow restricted to $U$. Now, let us
fix a rotation direction $\alpha$ which is sufficiently close to the vertical one,
$\frac\partial{\partial q_2}$ (namely, forming the angle smaller than $\pi/4$ with 
this direction).  Now we can try to define a map $\tilde F$ from the circle $\Sigma:=\{(q_1, q_2): q_2=0\}$ to itself by 
starting from any  point $\mathbf q$ from $\Sigma$, following the geodesic 
with the rotation vector $\alpha$ until its next intersection with $\Sigma$, and 
pronouncing this intersection point $\tilde F (\mathbf q)$ the image of $\mathbf q$ (we use $\tilde{}$ to emphasize that we are dealing with a perturbed flow).
Note that there is no reason to believe that this actually is a map and not 
a one-to-many
correspondence. We do not use this ``map" in the course of the proof, furthermore, 
even afterwards, we do not prove its existence, we have only a partially defined map arising from minimizing  $ \widetilde{\mathbb{T}}^2$-geodesics. As we see {\it a posteriori},
even if this 
map is
correctly defined, in our examples it cannot be smooth. 
If $\alpha$ has irrational slope and we are in situation (1) with a geodesic 
with irrational rotation direction and which is not dense on the torus, 
this ``map" is a 
Denjoy example 
(a $C^1$ counterexample to the Denjoy Theorem when the bounded 
variation condition is not satisfied,  see \cite{KH}, p. 403), and vice versa. 

 So far, we do not even know the answer to the following question:

\begin{center}
\textbf{Question:} Let $(\mathbb{T}^n, \varphi)$ be a Finsler torus without conjugate points,  and let $\gamma$ be a geodesic with irrational rotation vector.  Is it true that 
$\gamma$ is dense in $\mathbb{T}^n$?
\end{center}

In this paper we deal with an approximative version of the above question. To be more specific, we make a $C^{\infty}$- small perturbation of any flat torus to get a nondense geodesic whose rotation vector points at any given  Liouville direction.  Recall that a rationally independent vector is called \textit{Liouville} if it is not Diophantine, and a vector $v\in\mathbb{R}^n$ is called \textit{Diophantine} if there exist $\gamma, \tau>0$ such that $|v\cdot k|\geq \gamma|k|^{-\tau}$ for all $k\in\mathbb{Z}^n\backslash \{\textbf{0}\}$.

\begin{theorem}\label{thm1}
For any flat Finsler torus $(\mathbb{T}^2, \varphi_0)$ and any Liouville number $\omega$, one can make a $C^{\infty}$-small perturbation on  $\varphi_0$ in the class of Finsler metric so that the resulting metric has a nondense geodesic with rotation vector colinear to $(\omega, 1)$.  If $\varphi_0$ is reversible, the resulting metric can be made reversible as well.
\end{theorem}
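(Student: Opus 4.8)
\bigskip
\noindent\emph{Proposed approach.}\quad The plan is to translate the statement into a perturbation problem for monotone twist maps and to read off the non-dense geodesic from Aubry--Mather theory. Fix a transversal circle $\Sigma=\{y=0,\ \dot y>0\}$ in the unit tangent bundle of $(\mathbb{T}^2,\varphi_0)$. For the flat metric the first-return map $F_0\colon\Sigma\to\Sigma$ is a completely integrable monotone twist map of an annulus $(\mathbb{R}/\mathbb{Z})\times I$, foliated by invariant circles $\mathcal{C}_\rho$ indexed by the rotation number $\rho$; the circle $\mathcal{C}_\omega$ is the trace of the family of flat geodesics of direction $(\omega,1)$, whose union is all of $\mathbb{T}^2$ because $\omega$ is irrational. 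A Finsler metric that coincides with $\varphi_0$ outside a small disc $D$ changes $F_0$ only along the geodesics meeting $D$; if the perturbation is $C^\infty$-small the new return map $F_\varepsilon$ is $C^\infty$-close to $F_0$ and still a monotone twist map. That such a metric bump can be designed to impose a prescribed local ``scattering'' on the geodesics crossing $D$ is exactly the sort of statement furnished by the dual lens map constructions of \cite{C2} and its antecedents, which I would quote here.

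The core of the argument is to choose the bump so that the invariant circle $\mathcal{C}_\omega$ is \emph{destroyed}; this is where being Liouville is used. Write $\omega=\lim_{k\to\infty}p_k/q_k$ with $|q_k\omega-p_k|$ decaying faster than any power of $q_k^{-1}$, which is possible precisely because $\omega$ is not Diophantine. I would assemble the perturbation as a $C^\infty$-convergent series of bumps carried by a nested sequence of shrinking discs inside $D$ (so the total support stays in $D$), the $k$-th one breaking the $p_k/q_k$ resonance: it turns the circle $\mathcal{C}_{p_k/q_k}$ into a Birkhoff region of instability of some width $\delta_k$, for instance by creating a hyperbolic periodic orbit of rotation number $p_k/q_k$ together with a transverse homoclinic intersection. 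Because $|\,q_k\omega-p_k\,|$ is superpolynomially small, one can keep the $k$-th bump small enough that the series converges in $C^\infty$ and is as small as desired, while still arranging $\delta_k>|\omega-p_k/q_k|$, so that the rotation number $\omega$ lies inside the $k$-th region of instability for all large $k$. Since invariant circles of a twist map are linearly ordered by rotation number and none can sit inside a region of instability, $F_\varepsilon$ has no invariant circle of rotation number $\omega$. (Alternatively one could invoke existing results on destruction of Liouville invariant circles and only do the work of realizing the destroying perturbation as a metric bump on a small disc.)

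To finish, Aubry--Mather theory still provides a nonempty Mather set $\mathcal{M}_\omega\subset\Sigma$ of rotation number $\omega$, contained in a Lipschitz graph over the base circle; not being an invariant circle, it is a Cantor set, hence nowhere dense in $\Sigma$. The restriction of $F_\varepsilon$ to $\mathcal{M}_\omega$ is semiconjugate to the rigid rotation by $\omega$, so every orbit in $\mathcal{M}_\omega$ is recurrent with closure inside $\mathcal{M}_\omega$. Taking one such orbit and letting $\gamma$ be the corresponding geodesic of the perturbed metric, the rotation vector of $\gamma$ is colinear to $(\omega,1)$ by the way rotation numbers were set up. Moreover the closure of $\gamma$ in the unit tangent bundle lies in the $F_\varepsilon$-flow-out of the Cantor set $\mathcal{M}_\omega$; this flow-out is flow-invariant and has nowhere dense trace on every transversal, hence empty interior, and the graph property of Mather sets makes its projection to $\mathbb{T}^2$ nowhere dense as well. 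Thus $\gamma$ is the desired non-dense geodesic with rotation vector colinear to $(\omega,1)$.

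The step I expect to be the main obstacle is the destruction of $\mathcal{C}_\omega$ under the two simultaneous constraints: the perturbation must be $C^\infty$-small \emph{and} supported on one fixed small disc. A generic twist-map perturbation is not realizable by a metric bump on a small disc, so the dual lens map machinery has to be flexible enough to plant the required local scattering at the right points of the $p_k/q_k$ closed geodesics, and the quantitative Liouville estimates must be balanced carefully against the loss of derivatives and the shrinking supports so that the infinite superposition converges in $C^\infty$ and genuinely kills every invariant circle of rotation number $\omega$.
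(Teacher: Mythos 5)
Your overall architecture matches the paper's: reduce to the Poincar\'e return map of the flat flow (an integrable twist map), destroy the invariant circle of rotation number $\omega$ by a $C^\infty$-small \emph{compactly supported} perturbation of the twist map, realize that perturbation as a metric bump on a small disc via the dual lens map machinery, and extract the non-dense geodesic from the Aubry--Mather Cantor set. Two remarks on the realization step: your worry that ``a generic twist-map perturbation is not realizable by a metric bump on a small disc'' is resolved by Proposition \ref{mainprop1} (built on Corollary \ref{maincor} and Theorem \ref{thm3}), which realizes \emph{any} sufficiently small symplectic perturbation supported on a compact set $K$ satisfying the mild projection conditions $\Pi_q(K)\subseteq B(R)$, $\Pi_q(R_1(K))\subseteq B(R^{-1})$; so no delicate ``planting of scattering at the right points'' is needed. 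One also has to check the twist hypotheses quantitatively (Lemmas \ref{lem1} and \ref{lem2}: $f_0''$ bounded below, generating function in $\mathscr{H}_\theta$), which you implicitly assume.

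The genuine gap is in your destruction mechanism. You propose an infinite superposition of bumps, the $k$-th creating a ``Birkhoff region of instability of width $\delta_k>|\omega-p_k/q_k|$,'' but you never link the size of the $k$-th bump to the width $\delta_k$ of the resulting instability region; creating a hyperbolic $p_k/q_k$-orbit with a transverse homoclinic point does not by itself exclude invariant circles with rotation numbers in a whole interval of prescribed width around $p_k/q_k$. Without that quantitative link, the simultaneous requirements ``the series converges in $C^\infty$'' and ``$\delta_k>|\omega-p_k/q_k|$'' are exactly the unproved heart of the matter, as you yourself acknowledge. The paper avoids the infinite superposition entirely: following Mather \cite{M88}, it fixes the allowed perturbation size $\epsilon$ and the order $r$ first, picks a \emph{single} rational $p/q$ Liouville-close to $\omega$, and adds a compactly supported product perturbation $u(x)v(x')$ (plus a second correction $w(x)v(x')$) to the generating function so that the Peierls barrier $P_{p/q+,\tilde h}$ becomes positive by an explicit amount $\sim \epsilon/q^{r+1}$ on an interval; Mather's modulus-of-continuity theorem for the Peierls barrier then transfers this positivity to $P_{\omega,\tilde h}$ precisely because $|\omega-p/q|$ can be made superpolynomially small in $q$. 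Positivity of $P_{\omega,\tilde h}$ at one point is what rules out the invariant circle. The content of Proposition \ref{mainprop2} is that Mather's construction can be modified to have compact support, which is the new ingredient your proposal would also need but does not supply.
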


\begin{remark}
Although we formulate and prove the theorem for flat 2-tori,  similar proof works with minor modification for flat $\mathbb{T}^n=\mathbb{T}^{n-1}\times S^1 (n\geq 3)$ as well.  To have less cumbersome notations,   we only consider the 2-dimensional case in this paper.  Moreover,  the resulting Finsler torus has conjugate points, see Remark \ref{rem: conj_pts}.
\end{remark}


The paper is organized as follows.  In Section 2 we go over some background on Finsler manifold and results from dual lens map techniques. In Section 3 we cover some basic terminology and properties of twist maps and minimal configurations,  while in Section 4 we give an extension of Mather's destruction of invariant tori with Liouville rotation number \cite{M88} so that the perturbation of the generating function is supported on a small region on the cylinder. The results from Section 2 and 4 are combined to give a proof of Theorem \ref{thm1} in Section 5.

Here is a sketch of the proof. Firstly we show that the Poincar\'{e} map of the unperturbed geodesic flow is conjugate to $R_1: S^1\times (-\Lambda,1) \rightarrow S^1\times (-\Lambda,1)$ defined by
$$R_1(q_1, p_1)=(q_1+f'_0(p_1), p_1),$$
where $f_0$ is a smooth function defined near $0$. Secondly, in Lemma \ref{lem1} and Lemma \ref{lem2} we verify the twist conditions (see Definition 3.1) for $R_1$ and the conditions $(H_1)-(H_{6\theta})$ for $h$, a generating function of $R_1$. This allows us to use Proposition \ref{mainprop2} to get a $C^{\infty}$-small perturbation $\tilde{R}_1$ of $R_1$ and a nondense $\tilde{R}_1$-orbit with the given Liouville rotation number. Finally, Proposition \ref{mainprop1} is applied to obtain a Finsler metric whose Poincar\'{e} map is conjugate to $\tilde{R}_1$.

{\it Acknowledgments}. The authors are grateful to Victor Bangert and Federico Rodriguez Hertz for useful discussions. The authors would also like to express their gratitude to referees for valuable suggestions on the improvement of the paper.

\section{Simple Finsler metrics and dual lens maps}

We use some notation and techniques from \cite{BI10}, \cite{BI16}, \cite{C1} and \cite{C2}. To make this note more self-contained and reader-friendly, we copy them here.

\subsection{Finsler metrics and geodesics}
A \textit{Finsler metric} $\varphi$ on $M$ is a smooth family of quadratically convex norms $\varphi(x, \cdot)$ on each tangent space $T_x M$. It is \textit{reversible} if $\varphi(x,v)=\varphi(x,-v)$ for all $x\in M, v\in T_x M$. A \textit{unit sphere} (or \textit{indicatrix}) in $T_x M$ is defined to be the collection of all vectors $v\in T_xM$ with $\varphi(x,v)=1$. We denote by $UTM$ the unit tangent bundle of $(M,\varphi)$.

Let $\gamma:[a,b]\rightarrow M$ be a smooth curve on a Finsler manifold $(M, \varphi)$. We may assume that $\gamma$ is \textit{unit-speed}, namely, $\varphi(\gamma(s), \gamma'(s))=1$ for all $s\in[a,b]$.  The length $L(\gamma)$ of $\gamma$ is simply $b-a$.  The \textit{distance function} $d_{\varphi}: M\times M\to \mathbb{R}$ is defined via
$d_{\varphi}(x,y):=\inf_{\gamma} L(\gamma)$, where the infimum is taken over all smooth curves starting at $x$ and ending at $y$.  $d_{\varphi}$ could be non-symmetric if $\varphi$ is not reversible. A unit-speed curve $\gamma: [a, b]\rightarrow M$ is called \textit{minimal} if for any $a\leq t_1<t_2\leq b, d_{\varphi}(\gamma(t_1), \gamma(t_2))=t_2-t_1$.  A locally minimal curve is called
a \textit{geodesic}.

The dual norm $\varphi^*$  on the cotangent bundle $T^*M$ of $M$ is defined by
\begin{equation}\label{eq: def_dual_norm}
\varphi^*(\alpha):=\sup_{v\in UT_xM}\{\alpha(v)\}, \text{ where }\alpha\in T^*_x M,
\end{equation}
and denote by $UT^*M$ the unit cotangent bundle. Let $\mathscr{L}: TM\rightarrow T^*M$ be the Legendre transform of the Lagrangian $\varphi^2/2$. It maps $UTM$ to $UT^*M$. For any tangent vector $v\in UT_xM$, its Legendre transform $\mathscr{L}(v)$ is the unique covector $\alpha\in U_x^*M$ such that $\alpha(v)=1$.

\subsection{Simple manifolds and Dual lens maps}
A Finsler $n$-disc $(D,\varphi)$ is called \textit{simple} if it satisfies the following three conditions:

(1) Every pair of points in $D$ is connected by a unique geodesic.

(2) Geodesics depend smoothly on their endpoints.

(3) $\partial D$ is strictly convex, that is, geodesics never touch it at their interior points.

Denote by $U_{in}$ (resp.  $U_{out}$) the set of unit tangent vectors with base points at the boundary $S:=\partial D$ and pointing inwards (resp.  outwards).  For any vector $v\in U_{in}$, we look at the geodesic with initial velocity $v$. Once it hits the boundary again, we get the exiting vector $\beta(v)\in U_{out}$.  This defines the \textit{lens map} $\beta: U_{in}\rightarrow U_{out}$. The \textit{dual lens map} is then defined by $\sigma:=\mathscr{L}\circ \beta\circ \mathscr{L}^{-1}: U^*_{in}\to U^*_{out}$, where  $U^*_{in}:=\mathscr{L}(U_{in})$ and $U^*_{out}:=\mathscr{L}(U_{out})$. If $\varphi$ is reversible then $\sigma$ is symmetric in the sense that $-\sigma(-\sigma(\alpha))=\alpha$ for all $\alpha\in U^*_{in}$. 

Note that $U^*_{in}$ and $U^*_{out}$ are $(2n-2)$-dimensional submanifolds of $T^*D$. The restriction of the canonical symplectic 2-form of $T^*D$ to $U^*_{in}$ and $U^*_{out}$ are nondegenerate hence  the symplectic structure. One can check that the dual lens map $\sigma$ is symplectic.

\subsection{Perturbation of Dual Lens Maps}
Under certain natural restrictions, a symplectic perturbation of $\sigma$ is the dual lens map of some metric that is close to $\varphi$:

\begin{theorem}[Burago-Ivanov \cite{BI16}]\label{thm3}
Assume that $n\geq 3$. Let $\varphi$ be a simple metric on $D=D^n$ and $\sigma$ its dual lens map. Let $W$ be the complement of a compact set in $U^*_{in}$. Then every sufficiently small symplectic perturbation $\tilde{\sigma}$ of $\sigma$ such that $\tilde{\sigma}|_W=\sigma|_W$ can be realized by the dual lens map of a simple metric $\tilde{\varphi}$ which coincides with $\varphi$ in some neighborhood of $\partial D$. The choice of $\tilde{\varphi}$ can be made in such a way that $\tilde{\varphi}$ converges to $\varphi$ whenever $\tilde{\sigma}$ converges to $\sigma$ (in $C^{\infty}$). In addition, if $\varphi$ is a reversible Finsler metric and $\tilde{\sigma}$ is symmetric then $\tilde{\varphi}$ can be chosen reversible as well.

\end{theorem}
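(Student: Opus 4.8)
The plan is to reduce the realization problem for $\tilde\sigma$ to a finite sequence of small, localized modifications of $\varphi$ carried out deep in the interior of $D$, each realizing an elementary symplectic perturbation of the geodesic scattering, and to control simplicity and the size of $\tilde\varphi$ by combining openness of the simple-metric condition with the smallness of the data. Concretely, set $\psi:=\tilde\sigma\circ\sigma^{-1}$, a symplectomorphism of $U^*_{out}$ that is $C^\infty$-close to the identity and equals the identity outside a compact set (because $\tilde\sigma|_W=\sigma|_W$). Since $n\geq 3$ one has $H^1_c(U^*_{out})=0$ ($U^*_{out}$ is a disc bundle over $S=\partial D\cong S^{n-1}$, so this follows from the Thom isomorphism), hence the flux obstruction vanishes and, being $C^\infty$-small and compactly supported, $\psi$ is the time-one map $\phi^1_{H}$ of a $C^\infty$-small, compactly supported, time-dependent Hamiltonian $H_t$ on $U^*_{out}$. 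Slicing $[0,1]$ into short subintervals and using a partition of unity subordinate to a cover by small balls, one writes $\psi=\psi_N\circ\cdots\circ\psi_1$ with each $\psi_i$ a $C^\infty$-small symplectomorphism supported in a small ball $B_i\subset U^*_{out}$. It therefore suffices to realize one such factor, i.e.\ to show that if $\psi$ is a $C^\infty$-small symplectomorphism of $U^*_{out}$ supported in a small ball $B$, then $\psi\circ\sigma$ is the dual lens map of a simple metric coinciding with $\varphi$ near $\partial D$ and of size controlled by that of $\psi$; concatenating the $N$ modifications in pairwise disjoint interior regions then proves the theorem, since finitely many small perturbations add up to a small one.

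Next, one builds a local model for realizing a single factor. By simplicity of $\varphi$, the ball $B$ of exit covectors is swept out by a thin pencil of $\varphi$-geodesics which, along some interior arc, are mutually near-parallel; fix a thin box $Q\subset\mathrm{int}(D)$, disjoint from a neighbourhood of $\partial D$ and very thin in its transverse directions, around such an arc. For $n\geq 3$ there is enough transverse room that the only geodesics of $\varphi$ crossing $Q$ substantially form a small neighbourhood of this arc in the $(2n-2)$-dimensional space of geodesics (precisely the pencil of $B$), all others merely grazing $Q$. Choose coordinates straightening the $\varphi$-geodesics inside $Q$, so that a crossing geodesic is recorded by ``screen data'' $(x,p)\in\mathbb{R}^{n-1}\times\mathbb{R}^{n-1}$ and the crossing map $\Sigma$ is a symplectomorphism of an open subset of $\mathbb{R}^{2n-2}$ close to the identity. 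Perturb $\varphi$ only in the central part of $Q$, away from its sides, by a $C^\infty$-small Finsler perturbation $\delta\varphi$: smallness keeps each indicatrix quadratically convex, the interior support keeps $\partial D$ strictly convex, and smallness keeps the metric simple (simple metrics form a $C^2$-open set, so uniqueness of geodesics and absence of conjugate points persist). The dual lens map of $\varphi+\delta\varphi$ is then $\sigma$ precomposed with the symplectic insertion determined by $\Sigma_{\delta\varphi}\circ\Sigma^{-1}$, so the problem reduces to choosing $\delta\varphi$ with $\Sigma_{\delta\varphi}\circ\Sigma^{-1}=\psi$.

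The heart of the matter --- and the step I expect to be the main obstacle --- is the surjectivity of this last assignment near the identity. A $C^\infty$-small symplectomorphism $\psi$ of the $(2n-2)$-dimensional screen is encoded by a $C^\infty$-small generating function, and the linearization of $\delta\varphi\mapsto\Sigma_{\delta\varphi}\circ\Sigma^{-1}$ at $\delta\varphi=0$ sends $\delta\varphi$ essentially to the integral of $\delta\varphi$ along the unperturbed crossing geodesics --- an $X$-ray-type transform. Since a Finsler perturbation is a function on the $(2n-1)$-dimensional unit tangent bundle over $Q$, the crossing geodesics foliate this bundle by $1$-dimensional leaves whose leaf space is the $(2n-2)$-dimensional screen, and ``integrate along the leaf'' is the corresponding fiber integration, which is visibly onto $C^\infty$ functions on the screen (to hit a prescribed generating function, place a bump of the right total mass along each crossing ray). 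This settles the linearized statement. Upgrading it to the nonlinear realization --- solving $\Sigma_{\delta\varphi}\circ\Sigma^{-1}=\psi$ with $\delta\varphi\to0$ as $\psi\to\mathrm{id}$, in the full $C^\infty$ category --- is the delicate part: it requires an implicit-function/iteration scheme run in the tame Fr\'echet category (Nash--Moser), or, more economically, solving in a fixed high but finite regularity and bootstrapping via the smoothing built into the geodesic flow, all the while keeping the estimates sharp enough that the size of $\delta\varphi$ is dominated by that of $\psi$.

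Finally, composing the $N$ interior modifications obtained above, placed in pairwise disjoint thin boxes, produces $\tilde\varphi$ which equals $\varphi$ near $\partial D$, is still simple, and has dual lens map $\psi_N\circ\cdots\circ\psi_1\circ\sigma=\tilde\sigma$; since the size of each modification is controlled by that of $\psi_i$, hence by that of $\tilde\sigma-\sigma$, one gets $\tilde\varphi\to\varphi$ in $C^\infty$ whenever $\tilde\sigma\to\sigma$ in $C^\infty$. If moreover $\varphi$ is reversible and $\tilde\sigma$ is symmetric, then the Hamiltonians $H_t$ of the first step may be chosen invariant under the involution $\alpha\mapsto-\sigma(\alpha)$ and each local $\delta\varphi$ may be symmetrized in the velocity variable, so that $\tilde\varphi$ can be taken reversible as well.
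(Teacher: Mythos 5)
You should first note that the paper does not prove this statement at all: it is quoted verbatim from Burago--Ivanov \cite{BI16} and used as a black box (the paper only supplies the two--dimensional complement, Proposition \ref{prop1}, and the gluing argument of Proposition \ref{mainprop1}). So there is no in-paper proof to compare against; what you have written is an attempted reconstruction of the argument of \cite{BI16}. Your skeleton does match the known strategy in outline: the computation $H^1_c(U^*_{out})=0$ for $n\geq 3$ killing the flux obstruction (this is exactly where the hypothesis $n\geq 3$ enters, and its failure for $n=2$ is what the integral condition in Proposition \ref{prop1} compensates for), the fragmentation of $\tilde{\sigma}\circ\sigma^{-1}$ into small compactly supported Hamiltonian factors, and the realization of each factor by a Finsler perturbation supported in the interior, with an X-ray--type transform as the linearization.

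Two steps, however, have genuine gaps. (i) Localization: a metric perturbation supported in a spatial box $Q$ affects \emph{every} geodesic that enters $Q$, in whatever direction --- an open, full-dimensional subset of the $(2n-2)$-dimensional space of geodesics --- and not only the thin pencil aimed at the ball $B$; transverse geodesics do not ``merely graze'' the support, they pass straight through it and their scattering data changes. To confine the effect to the pencil you must localize $\delta\varphi(x,v)$ in the direction $v$ as well as in the position $x$, i.e.\ deform each indicatrix only over a small cone of directions. This is possible precisely because the metric is Finsler rather than Riemannian, and it is the crux of the entire construction; as written your argument omits it. (ii) The exact, non-linearized solution of $\Sigma_{\delta\varphi}\circ\Sigma^{-1}=\psi$ is essentially the content of the theorem, and you only name the difficulty and defer to an unspecified Nash--Moser or bootstrapping scheme; that is identifying the obstacle, not overcoming it. The argument of \cite{BI16} (and of the earlier boundary-rigidity paper \cite{BI10}) avoids a hard implicit function theorem: each small factor is further decomposed into time-one maps of Hamiltonians of product type $u(x)v(p)$, each of which is realized by adding the corresponding term to the dual norm $\varphi^*$ in a thin slab crossed by the pencil (the crossing parameter playing the role of time), and the quadratically small discrepancy is absorbed by iterating the construction. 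Finally, in the reversible case, symmetrizing $\delta\varphi$ in $v$ also alters the scattering of the reversed geodesics, so the hypothesis $-\tilde{\sigma}(-\tilde{\sigma}(\alpha))=\alpha$ must actually be used to make the two contributions consistent; your one-line symmetrization does not address this.
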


When $n=2$, due to some topological obstructions, Theorem \ref{thm3} holds under additional conditions. 

\begin{definition}
For any symplectic map $\sigma: U^*_{in}\rightarrow U^*_{out}$ we can define two maps $P_{\sigma}: U^*_{in}\rightarrow S\times S$ and $Q_{\sigma}: U^*_{out}\rightarrow S\times S$ by 
$$P_{\sigma}(\alpha):=(\pi(\alpha), \pi(\sigma(\alpha)))$$
and
$$Q_{\sigma}(\beta):=(\pi(\sigma^{-1}(\beta)), \pi(\beta)),$$
here $\pi: T^*D\rightarrow D$ is the bundle projection. 
\end{definition}
\begin{remark}
Note that $P_{\sigma}=Q_{\sigma}\circ\sigma$ and both maps are bijections.  $P^{-1}_{\sigma}$($Q^{-1}_{\sigma}$ respectively) maps two different points on the boundary $S$ to the inwards (outwards respectively) covector of the geodesic connecting these two given points.
\end{remark}
\begin{definition}
Let $\Delta:=\{(x,x)\in S\times S: x\in S\}$. We define a 1-form $\lambda_{\sigma}$ on $S\times S\backslash\Delta$ as follows. For $p,q\in S, p\neq q, \xi\in T_p S, \eta\in T_q S$, define
$$\lambda_{\sigma}(\xi,\eta):=-P^{-1}_{\sigma}(p,q)(\xi)+Q^{-1}_{\sigma}(p,q)(\eta).$$
\end{definition}
\begin{proposition}[\cite{BI16}]\label{prop1}
Let $\sigma$ be the dual lens map of a simple Finsler metric $\varphi$ on $D^2$. Let $W$ be the complement of a compact set in $U^*_{in}$ and  $\tilde{\sigma}$ is a symplectic perturbation of $\sigma$ with $\tilde{\sigma}|_W=\sigma|_W$. Then $\tilde{\sigma}$ is the dual lens map of a simple metric $\tilde{\varphi}$ if and only if
$$\int_{\{p\}\times\{S\backslash p\}}\lambda_{\tilde{\sigma}}=0 $$
for some (and then all) $p\in S$. Convergence and reversible cases are the same as those in Theorem \ref{thm3}.
\end{proposition}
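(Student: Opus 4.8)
The plan is to prove the two implications separately; the ``only if'' part, together with the claim that the integral does not depend on $p$, follows from two elementary observations, while the ``if'' part carries the weight. For any symplectic $\tilde\sigma$ with $\tilde\sigma|_W=\sigma|_W$, writing $\theta_{in}$ (resp.\ $\theta_{out}$) for the restriction to $U^*_{in}$ (resp.\ $U^*_{out}$) of the canonical $1$-form of $T^*D$, a direct computation yields $\lambda_{\tilde\sigma}=(Q_{\tilde\sigma}^{-1})^*\theta_{out}-(P_{\tilde\sigma}^{-1})^*\theta_{in}$; since $P_{\tilde\sigma}=Q_{\tilde\sigma}\circ\tilde\sigma$ and $\tilde\sigma$ is symplectic, differentiating shows $\lambda_{\tilde\sigma}$ is closed. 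Moreover the nearly tangent covectors fill neighbourhoods of the two ends of the open cylinder $U^*_{in}$, hence lie in $W$, so $\tilde\sigma=\sigma$ there; thus on a punctured neighbourhood of $\Delta$ in $S\times S$ one has $\lambda_{\tilde\sigma}=\lambda_\sigma$, and, for the simple metric $\varphi$, the first variation of arclength identifies the partial differentials of the boundary distance $(p,q)\mapsto d_\varphi(p,q)$ with $-P_\sigma^{-1}(p,q)|_{T_pS}$ and $Q_\sigma^{-1}(p,q)|_{T_qS}$, i.e.\ $\lambda_\sigma=d(d_\varphi)$ near $\Delta$, with $d_\varphi\to 0$ on $\Delta$. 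From this and Stokes, $\int_{\{p\}\times(S\setminus p)}\lambda_{\tilde\sigma}$ is independent of $p$: a homotopy from one such arc to another sweeps a region on which $d\lambda_{\tilde\sigma}=0$, while its two ends trace paths lying near $\Delta$, where $\lambda_{\tilde\sigma}=d(d_\varphi)$ with $d_\varphi\to 0$, so the correction terms vanish. This proves the ``and then all'' clause. The ``only if'' direction now follows: if $\tilde\sigma$ is the dual lens map of a simple $\tilde\varphi$, the same first variation identity gives $\lambda_{\tilde\sigma}=d(d_{\tilde\varphi})$ on all of $S\times S\setminus\Delta$, and $d_{\tilde\varphi}$ extends continuously by $0$ along $\Delta$, so the integral equals $0-0=0$.

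For the ``if'' direction I would first manufacture a boundary distance function. The form $\lambda_{\tilde\sigma}$ is not merely closed but exact on $S\times S\setminus\Delta$: a loop generating $H_1(S\times S\setminus\Delta)$ may be taken close to one of the two ends of the cylinder, where $\lambda_{\tilde\sigma}=d(d_\varphi)$, so it carries zero period. Pick $a_0$ with $d a_0=\lambda_{\tilde\sigma}$ and normalise it to coincide with $d_\varphi$ on a neighbourhood of one end; the hypothesis $\int_{\{p\}\times(S\setminus p)}\lambda_{\tilde\sigma}=0$ then forces $a_0=d_\varphi$ near the other end as well. Consequently $a_0$ extends across $\Delta$ with $a_0|_\Delta=0$, $a_0>0$ off $\Delta$, and $a_0\equiv d_\varphi$ near $\Delta$, so it is a legitimate candidate for $d_{\tilde\varphi}|_{S\times S}$; the integral condition is precisely what is needed to produce it.

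The heart of the matter is the realization step: constructing a simple Finsler metric $\tilde\varphi$ on $D^2$, equal to $\varphi$ near $\partial D$ and $C^\infty$-close to it, whose dual lens map is $\tilde\sigma$ --- equivalently, whose scattering relation is $\mathscr{L}^{-1}\tilde\sigma\mathscr{L}$ and whose boundary distance is $a_0$. I would follow the construction underlying Theorem \ref{thm3}, which recovers the perturbed metric from the perturbed dual lens map; the only point where the dimension enters that argument is that the boundary distance function attached to $\tilde\sigma$ must be single valued on $S\times S\setminus\Delta$ and extend by $0$ across $\Delta$. When $n\ge 3$ a punctured neighbourhood of $\Delta$ in $S\times S$ is connected and this is automatic; when $n=2$ such a neighbourhood has two components, according to whether $q$ lies slightly ahead of or slightly behind $p$, and reconciling the two resulting normalisations of $a_0$ is exactly the vanishing of $\int_{\{p\}\times(S\setminus p)}\lambda_{\tilde\sigma}$, i.e.\ the previous paragraph. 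Once $\tilde\varphi$ is built, one still has to check that it is genuinely simple --- strict convexity of $\partial D$, smooth dependence of geodesics on their endpoints, and uniqueness of connecting geodesics (no conjugate points) --- which is a routine perturbation argument for a sufficiently small perturbation that is unchanged near $\partial D$, the only conceivably delicate region being a collar of $\partial D$, where $\tilde\varphi=\varphi$. This realization step, together with the bookkeeping that keeps $\tilde\varphi$ $C^\infty$-close to $\varphi$ (and reversible when $\varphi$ and $\tilde\sigma$ are), is where essentially all the difficulty lies; the topological obstruction, once isolated as above, costs a single line.
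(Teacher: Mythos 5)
This proposition is quoted from \cite{BI16}; the paper supplies no proof of it, so there is no internal argument to compare yours against. On its own terms your reconstruction of the obstruction-theoretic content is sound and matches the logic of the cited source: the identity $\lambda_{\tilde\sigma}=(Q_{\tilde\sigma}^{-1})^*\theta_{out}-(P_{\tilde\sigma}^{-1})^*\theta_{in}$ together with $P_{\tilde\sigma}=Q_{\tilde\sigma}\circ\tilde\sigma$ does give closedness; the nearly tangent covectors do form neighborhoods of the two ends of $U^*_{in}$, hence lie in $W$, so $\lambda_{\tilde\sigma}=\lambda_\sigma=d(d_\varphi)$ on a punctured neighborhood of $\Delta$; the first-variation identification of $\lambda_\sigma$ with $d(d_\varphi)$ is correct; and your Stokes argument for $p$-independence and the resulting ``only if'' direction are complete. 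Your isolation of the $n=2$ issue is also the right one: a punctured neighborhood of $\Delta$ in $S\times S$ has two components, and the integral condition is exactly what reconciles the two normalizations of the primitive $a_0$ so that a single boundary-distance candidate vanishing on $\Delta$ exists. The one caveat worth stating explicitly is that your ``if'' direction is a reduction, not a proof: Theorem \ref{thm3} as stated assumes $n\ge 3$, so you cannot invoke its statement at $n=2$ but must appeal to the underlying realization construction of \cite{BI16} (recovering a simple metric, $C^\infty$-close to $\varphi$ and equal to it near $\partial D$, from the normalized boundary data $a_0$ and the scattering relation), and you take on faith that dimension enters that construction only through the single-valuedness of $a_0$. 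Given that the proposition itself is imported from \cite{BI16}, this level of outsourcing is defensible, but your write-up should make clear that the realization step is being cited rather than proved.
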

For each $p\in S$, let $(U^*_{in})_p$ be the fiber of $U^*_{in}$ over $p$.  We have the following corollary by Theorem \ref{thm3} and Proposition \ref{prop1}:
\begin{corollary}\label{maincor}
Let $\sigma$ be the dual lens map of a simple Finsler metric on $D^n$ ($n\geq 2$). Let $W$ be the complement of a compact set in $U^*_{in}$ and  $\tilde{\sigma}$ a symplectic perturbation of $\sigma$ with $\tilde{\sigma}|_W=\sigma|_W$. If there is an open subset $O \subseteq \partial D^n$ such that for each $p\in O$, 
$$(U^*_{in})_p\subseteq W, \eqno(\ast)$$
then $\tilde{\sigma}$ is the dual lens map of a simple metric $\tilde{\varphi}$. Convergence and reversible cases are the same as those in Theorem \ref{thm3}.
\end{corollary}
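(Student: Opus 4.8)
The plan is to treat the two cases $n\geq 3$ and $n=2$ separately, the first being an immediate quotation and the second being where hypothesis $(\ast)$ actually does work.

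For $n\geq 3$ I would simply invoke Theorem \ref{thm3}: any sufficiently small symplectic perturbation $\tilde{\sigma}$ with $\tilde{\sigma}|_W=\sigma|_W$ is automatically the dual lens map of a simple metric $\tilde{\varphi}$ coinciding with $\varphi$ near $\partial D$, and the convergence and reversibility clauses come for free; condition $(\ast)$ is not needed in this range. So all the real work is in dimension $2$, where Proposition \ref{prop1} imposes the single extra requirement that $\int_{\{p\}\times(S\setminus p)}\lambda_{\tilde{\sigma}}=0$ for one (hence every) boundary point $p$. The idea is to take $p$ in the open set $O$ provided by $(\ast)$ and check that the perturbation leaves this integral unchanged.

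Concretely, I would argue in three steps. (i) For the unperturbed $\sigma$ the integral already vanishes: applying Proposition \ref{prop1} to the trivial perturbation $\tilde{\sigma}=\sigma$ and using that $\sigma$ is the dual lens map of the genuine simple metric $\varphi$ forces $\int_{\{p\}\times(S\setminus p)}\lambda_{\sigma}=0$ for every $p\in S$. (ii) Fix $p\in O$. By $(\ast)$ the whole fiber $(U^*_{in})_p$ lies in $W$, so $\tilde{\sigma}=\sigma$ on all of $(U^*_{in})_p$. Since the first coordinate of $P_{\tau}(\alpha)$ equals $\pi(\alpha)$ for any symplectic $\tau$, the covector $P^{-1}_{\tilde{\sigma}}(p,q)$ must have base point $p$; and on the fiber over $p$ one has $\pi\circ\tilde{\sigma}=\pi\circ\sigma$, whence $P_{\tilde{\sigma}}=P_{\sigma}$ there, and since $P_{\sigma}$ maps $(U^*_{in})_p$ bijectively onto $\{p\}\times(S\setminus\{p\})$ we conclude $P^{-1}_{\tilde{\sigma}}(p,q)=P^{-1}_{\sigma}(p,q)$ for every $q\neq p$, this covector lying in $W$. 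Combining the identity $Q^{-1}_{\tau}=\tau\circ P^{-1}_{\tau}$ (from $P_{\tau}=Q_{\tau}\circ\tau$) with $\tilde{\sigma}|_W=\sigma|_W$ gives likewise $Q^{-1}_{\tilde{\sigma}}(p,q)=\tilde{\sigma}(P^{-1}_{\sigma}(p,q))=\sigma(P^{-1}_{\sigma}(p,q))=Q^{-1}_{\sigma}(p,q)$. (iii) Hence $\lambda_{\tilde{\sigma}}$ and $\lambda_{\sigma}$ agree at every $(p,q)$ with $p\in O$ and $q\neq p$; in particular they have the same restriction to the curve $q\mapsto(p,q)$, so $\int_{\{p\}\times(S\setminus p)}\lambda_{\tilde{\sigma}}=\int_{\{p\}\times(S\setminus p)}\lambda_{\sigma}=0$. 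Proposition \ref{prop1} then delivers the simple metric $\tilde{\varphi}$ realizing $\tilde{\sigma}$, and the convergence and reversibility assertions are inherited from Proposition \ref{prop1} and Theorem \ref{thm3}.

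The main thing to be careful about is the bookkeeping behind step (ii): one must make sure $P_{\tilde{\sigma}}$ and $Q_{\tilde{\sigma}}$ are still bijections, so that $\lambda_{\tilde{\sigma}}$ is defined and the preimages above are unambiguous — this holds for sufficiently small perturbations — and one must confirm that the integrand along $\{p\}\times(S\setminus p)$ depends only on the behaviour of $\tilde{\sigma}$ on the fiber $(U^*_{in})_p$. That last point is precisely what $(\ast)$ is designed to guarantee and is the conceptual heart of the corollary; everything else is a matter of unwinding the definitions of $P_{\sigma}$, $Q_{\sigma}$ and $\lambda_{\sigma}$.
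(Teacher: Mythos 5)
Your proposal is correct and follows essentially the same route as the paper's proof: both reduce the $n=2$ case to checking the integral condition of Proposition \ref{prop1} at a point $p\in O$, and both observe that $(\ast)$ together with $\tilde{\sigma}|_W=\sigma|_W$ forces $P^{-1}_{\tilde{\sigma}}(p,\cdot)=P^{-1}_{\sigma}(p,\cdot)$ and $Q^{-1}_{\tilde{\sigma}}(p,\cdot)=Q^{-1}_{\sigma}(p,\cdot)$, hence $\lambda_{\tilde{\sigma}}=\lambda_{\sigma}$ along $\{p\}\times(S\setminus p)$. Your write-up is in fact more careful than the paper's (the explicit identity $Q^{-1}_{\tau}=\tau\circ P^{-1}_{\tau}$ and the bijectivity bookkeeping are left implicit there), so no changes are needed.
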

\begin{proof}
We have only to prove that for any $p\in O$ and $\xi\in (U^*_{in})_p$, 
$$\lambda_{\sigma}(\xi, \cdot)=\lambda_{\tilde{\sigma}}(\xi, \cdot).$$
Notice that on $(U^*_{in})_p$ we have $\sigma=\tilde{\sigma}$ hence $P_{\sigma}=P_{\tilde{\sigma}}$. Thus $P^{-1}_{\sigma}(p,\cdot)=P^{-1}_{\tilde{\sigma}}(p,\cdot)$. Similarly we have $Q^{-1}_{\sigma}(p,\cdot)=Q^{-1}_{\tilde{\sigma}}(p,\cdot)$, hence $\lambda_{\sigma}(\xi, \cdot)=\lambda_{\tilde{\sigma}}(\xi, \cdot)$ for all $\xi\in (U^*_{in})_p$.
\end{proof}

\subsection{Perturbation of flat metrics on $\mathbb{T}^n (n\geq 2)$}\label{sec: perturb_torus}
Let $(\mathbb{T}^n, \varphi_0)$ be a flat Finsler torus. The cotangent bundle $T^*\mathbb{T}^{n}$ is endowed with the canonical action-angle coordinates $(q_1, ..., q_n,p_1,..., p_n)$ of the geodesic flow. We think of $\mathbb{T}^n$ as the cube $[-1/2,1/2]^n$ with opposite sides identified. We are aiming at realizing any small symplectic perturbation of the Poincar\'{e} map supported on a small compact set as a result of a perturbation of the metric on $\mathbb{T}^n$.

Denote by 
$$\Psi(p_1,...,p_n):=\frac{\varphi_0^*(p_1,...,p_n)^2}{2}, \Psi_i:=\partial{\Psi}/\partial p_i, \text{ for }i=1,\cdots,n, $$
where $\varphi_0^*$ is the dual norm of $\varphi_0$ defined in \eqref{eq: def_dual_norm}. The geodesic flow on $T^*\mathbb{T}^n$ satisfies 
$$\dot{q}_i=\Psi_i, \dot{p}_i=0, \text{ for }i=1,\cdots,n, $$
Let $\textbf{q}=(q_1, ..., q_{n-1}), \textbf{p}=(p_1, ..., p_{n-1})$. Take a submanifold $T_0:=\{q_n=-1/2\}$ and a section 
$$\Gamma_0:=\{(\textbf{q}, q_n,\textbf{p}, p_n)\in UT^*\mathbb{T}^n: q_n=-1/2, \Psi_n>0\}.$$   
$\Gamma_0$ inherits a natural symplectic form from $T^*\mathbb{T}^n$.  By the Implicit Function Theorem,  for any covector in $\Gamma_0$, $p_n$ is a smooth function of $\textbf{p}$ with domain $U_p\subseteq \mathbb{R}^{n-1}$. More specifically, we have
$$p_n=-f_0(\textbf{p}), \textbf{p}\in U_p$$
and 
$$(f_0)_i(\textbf{p})=\frac{\Psi_i}{\Psi_n}(\textbf{p}, -f_0(\textbf{p})), \text{ for }i=1,\cdots,n.$$

Let $\Pi: \Gamma_0\to T_0\times U_p$ be the canonical projection defined by
$$\Pi(\textbf{q}, -1/2, \textbf{p}, p_n)=(\textbf{q}, \textbf{p}).$$
$\Pi$ is symplectic and it is a bijection. 
Denote by $R_0: \Gamma_0\rightarrow \Gamma_0$ the Poincar\'{e} return map to $\Gamma_0$ of the geodesic flow.  Define 
$$R_1:=\Pi\circ R_0\circ \Pi^{-1}: T_0\times U_p \rightarrow T_0\times U_p.$$
Since $R_0$ is symplectic, so does $R_1$. By equipping $T_0$ with an affine structure induced from $\mathbb{R}^{n-1}$,  a simple calculation gives us the following expression of $R_1$:
$$R_1(\textbf{q},\textbf{p})=\left(\textbf{q}+\nabla f_0(\textbf{p}), \textbf{p}\right) .$$

Denote by $\textbf{q}^2=\sum_{i=1}^{n-1}q^2_i$, $B(r):=\{\textbf{q}: \textbf{q}^2\leq r^2\}$ and $\Pi_q$ the canonical projection $(\textbf{q}, \textbf{p})\mapsto \textbf{q}$. We have the following analogue of Theorem \ref{thm3} for $\mathbb{T}^n$:

\begin{proposition}\label{mainprop1}
Let $(\mathbb{T}^n, \varphi_0)$ be a flat torus and $K\subseteq T_0\times U_p$ a compact set. If there exists $R>0$ such that $\Pi_q(K)\subseteq B(R)$ and $\Pi_q(R_1(K))\subseteq B(R^{-1})$, then any $C^{\infty}$-small symplectic perturbation $\tilde{R}_1$ of $R_1$ with $\tilde{R}_1|_{K^c}=R_1|_{K^c}$  is conjugated via $\Pi$ to the Poincar\'{e} return map to $\Gamma_0$ of the geodesic flow on some Finsler manifold $(\mathbb{T}^n, \tilde{\varphi})$. Convergence and reversible cases are the same as those in Theorem \ref{thm3}.
\end{proposition}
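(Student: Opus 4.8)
\emph{The plan} is to deduce Proposition~\ref{mainprop1} from Corollary~\ref{maincor} by exhibiting the return map $R_0=\Pi^{-1}\circ R_1\circ\Pi$ of the unperturbed geodesic flow as the relevant part of the dual lens map of a simple \emph{flat} disc embedded in $(\mathbb{T}^n,\varphi_0)$, and then transporting the given symplectic perturbation $\tilde R_1$ through that identification.

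\emph{Realizing $R_0$ as a dual lens map.} Cut $\mathbb{T}^n$ open along $\{q_n=-\tfrac12\}$; the result is $\mathbb{T}^{n-1}\times(-\tfrac12,\tfrac12)$, and taking a suitable convex neighbourhood of a fundamental domain (after also cutting along the remaining coordinate hypersurfaces) produces an embedded closed $n$-disc $D\subset\mathbb{T}^n$ which, with the restricted flat metric, is a simple Finsler disc: flatness gives a unique minimizing chord between any two points and smooth dependence on endpoints, while slightly bulging the faces and rounding the edges makes $\partial D$ strictly convex. Write $T^{\pm}\subset\partial D$ for the two faces lying over $\{q_n=\pm\tfrac12\}$. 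Flowing the geodesic flow of $\varphi_0$ from $\Gamma_0$ to its first crossing of $T^-$, and from $T^+$ to the next crossing of $\{q_n=-\tfrac12\}$, is a symplectomorphism $\Phi$ of $\Gamma_0$ onto the set $\Upsilon\subset U^*_{in}$ of upward covectors based on $T^-$. Since $\varphi_0$ is flat off $D$, geodesics there are straight lines and the lateral faces of $D$ are identified by torus translations, so the return map decomposes as $R_0=G\circ\sigma_D\circ\cdots\circ G\circ\sigma_D$, where $\sigma_D$ is the dual lens map of $D$ and $G$ is the gluing on the lateral faces; on the part of $\Upsilon$ coming from chords that do not wrap in the $\mathbf q$--directions, $R_0$ is conjugate via $\Phi$ and $\Pi$ simply to a restriction $\sigma_D|_{\Upsilon'}$. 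The hypotheses on $K$ are precisely what keep us in this part: $\Pi_q(K)\subseteq B(R)$ and $\Pi_q(R_1K)\subseteq B(R^{-1})$ force, after using the freedom $R\leftrightarrow R^{-1}$ (exchanging the roles of $T^-$ and $T^+$) and reversing the $q_n$-orientation, the chords attached to $K$ to enter $D$ through $B(R)\cap T^-$ and to leave through $B(R^{-1})\cap T^+$, and then — by re-centring the fundamental domain so that these entry points cluster near the origin (using homogeneity of the flat torus) and by choosing the $\mathbf q$-profile of $D$ thin enough to embed — to cross $D$ exactly once and to avoid an open portion $O\subset\partial D$, e.g. a collar of the lateral boundary.

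\emph{Transport and conclusion.} Identify $\tilde R_1$, via $\Pi$ and $\Phi$, with a $C^\infty$-small symplectic perturbation of $\sigma_D$ on $\Upsilon'$, and extend it by $\sigma_D$ over the rest of $U^*_{in}$ to get a symplectic map $\tilde\sigma$. It is a $C^\infty$-small perturbation of $\sigma_D$, it agrees with $\sigma_D$ on $W:=U^*_{in}\setminus\Phi(K)$ with $\Phi(K)$ compact, and, since $\Phi(K)\subset\Upsilon$ is disjoint from every fibre $(U^*_{in})_p$ with $p\in O$, hypothesis $(\ast)$ of Corollary~\ref{maincor} holds. Corollary~\ref{maincor} then yields a simple metric $\tilde\varphi$ on $D$, $C^\infty$-close to $\varphi_0|_D$ and equal to $\varphi_0$ near $\partial D$, with dual lens map $\tilde\sigma$; extend $\tilde\varphi$ by $\varphi_0$ to all of $\mathbb{T}^n$, a smooth extension as the two metrics agree near $\partial D$. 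The geodesic flow of $(\mathbb{T}^n,\tilde\varphi)$ is flat off $D$ and governed by $\tilde\sigma$ on $D$; because $\tilde\sigma=\sigma_D$ outside $\Phi(K)$, every chord whose $\Gamma_0$-datum lies outside $K$ — including one that re-enters $D$ after a torus translation, since then its re-entry covector lies outside $\Upsilon$ and hence in $W$ — traverses $D$ exactly as it does for $\varphi_0$, so the return map of $\tilde\varphi$ to $\Gamma_0$ agrees with that of $\varphi_0$ off $K$ and is conjugate via $\Pi$ to $\tilde R_1$ overall. Reversibility and $C^\infty$-convergence are inherited from Corollary~\ref{maincor}.

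\emph{Main obstacle.} The transport of structure is routine; the real work is the construction of $D$. One must produce a single embedded, simple, flat disc in $\mathbb{T}^n$ through which every $K$-chord passes exactly once with the correct entry and exit faces, while some open subset of $\partial D$ is met by no $K$-chord; and one must control the chords \emph{not} attached to $K$ (those wrapping in the $\mathbf q$--directions, or re-entering $D$ after a translation) so that they either fall outside the picture or are untouched by the perturbation. It is exactly the quantitative hypotheses $\Pi_q(K)\subseteq B(R)$ and $\Pi_q(R_1K)\subseteq B(R^{-1})$ that make this arrangeable, and carrying out the bookkeeping — shape and position of $D$, which face is entry and which exit, the thin flat collars, the convexification of the corners — is the technical heart of the proof.
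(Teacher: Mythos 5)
Your overall strategy is the paper's: transport the perturbation of the return map to a perturbation of the dual lens map of an embedded simple flat disc, verify condition $(\ast)$, invoke Corollary \ref{maincor}, and extend the resulting metric by $\varphi_0$. Where you diverge is in the choice of the disc, and that choice is exactly where your unexecuted ``technical heart'' lives. The paper simply takes the inscribed round ball $D_{1/2}=\{\sum_{i=1}^n q_i^2\le 1/4\}$ inside the fundamental cube. This ball is trivially a simple, strictly convex flat disc (no rounding of corners, no bulging of faces, no lateral gluings); the conjugating maps $\phi_1,\phi_2$ are just ``flow forward from $\Pi^{-1}(K)$ (resp.\ backward from $\Pi^{-1}(R_1(K))$) until first hitting $\partial D_{1/2}$''; the hypotheses $\Pi_q(K)\subseteq B(R)$ and $\Pi_q(R_1(K))\subseteq B(R^{-1})$ are used only to guarantee that each $K$-chord actually crosses this fixed small ball transversally; and $(\ast)$ holds because $\phi_1(\Pi^{-1}(K))$ is compact and misses every fibre over an open portion of the sphere $\partial D_{1/2}$. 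Since the metric is untouched outside the ball, all other geodesics are governed by $\tilde\sigma=\sigma$ and nothing further needs to be controlled.

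By contrast, your disc --- a convexified neighbourhood of a fundamental domain with faces over $q_n=\pm\tfrac12$ --- runs into genuine problems that you acknowledge but do not resolve. A convex body containing a fundamental domain cannot be embedded in $\mathbb{T}^n$ (it overlaps its translates), so $D$ must be strictly smaller and some chords and boundary regions escape it; strict convexity of $\partial D$ is incompatible with flat faces lying over $\{q_n=\pm\tfrac12\}$; and the phrase ``enter $D$ through $B(R)\cap T^-$'' does not parse when $R$ exceeds the diameter of the face, which the hypothesis permits (only the product of the two radii is constrained --- the condition is engineered so that the straight chord from $(\mathbf q,-\tfrac12)$ to $(\mathbf q',\tfrac12)$ with $|\mathbf q|\,|\mathbf q'|$ small passes near the origin, not so that the chord stays in one fundamental domain). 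The decomposition $R_0=G\circ\sigma_D\circ\cdots\circ G\circ\sigma_D$ with lateral gluings, and the bookkeeping of which chords wrap, is therefore not routine and is avoidable. If you replace your $D$ by the inscribed ball, your transport-and-conclude paragraph goes through essentially verbatim and coincides with the paper's proof.
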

\begin{remark}
We formulate Proposition \ref{mainprop1} for a family of compact sets $K$.  In practice, we use specific $K$ similar to a subset of the cone field in the fourth paragraph of the introduction.
\end{remark}
\begin{proof}
Let $D_{1/2}:=\{\sum_{i=1}^n q_i^2 \leq 1/4\}$ be the ball in the cube and $\sigma: U^*_{in}\rightarrow U^*_{out}$ the dual lens map of the Finsler disc $(D_{1/2}, \varphi_0)$. We only change the metric inside $D_{1/2}$. For any $\alpha_1\in \Pi^{-1}(K)$ (resp. $\alpha_2\in \Pi^{-1}(R_1(K))$), consider its forward orbit (resp. backward orbit) under the geodesic flow generated by $\varphi_0$. Since $\Pi_q(K)\subseteq B(R)$ and $\Pi_q(R_1(K))\subseteq B(R^{-1})$, the forward orbit  (resp. backward orbit) will intersect $U^*_{in}$ (resp. $U^*_{out}$) transversally and we denote the intersection by $\phi_1(\alpha_1)$ (resp. $\phi_2(\alpha_2)$). This defines a map $\phi_1: \Pi^{-1}(K)\rightarrow U^*_{in}$ (resp. $\phi_2: \Pi^{-1}(R_1(K))\rightarrow U^*_{out}$). It is clear that both $\phi_1$ and $\phi_2$ are symplectic bijections onto their images. 

The restriction of $R_0$ on $\Pi^{-1}(K)$ can be decomposed as
$$R_0|_{\Pi^{-1}(K)}=\phi_2^{-1} \circ \sigma \circ \phi_1.$$

Define a dual lens map $\tilde{\sigma}: U^*_{in}\rightarrow U^*_{out}$ by 
$$\tilde{\sigma}(\alpha):=\left\{
\begin{aligned}
&\phi_2\circ \tilde{R}_0\circ \phi_1^{-1}(\alpha),& &\text{ if } \alpha\in\phi_1(\Pi^{-1}(K)); \\
&\sigma(\alpha), & &\text{ otherwise.}
\end{aligned}
\right.$$
By definition, $\tilde{\sigma}$ coincides with $\sigma$ outside a compact set. Moreover $\tilde{\sigma}\rightarrow \sigma$ in $C^{\infty}$ as $\tilde{R}_0\rightarrow R_0$ in $C^{\infty}$.  It is also clear that $W=K^c$ satisfies $(\ast)$ for some open set $O\subseteq U_{in}^*$. By Corollary \ref{maincor} there exists a Finsler metric $\tilde{\varphi}$ in $D_{1/2}$ agreeing with $\varphi_0$ around the boundary $\partial D_{1/2}$ and the dual lens map for $(D_{1/2},\tilde{\varphi})$ is exactly $\tilde{\sigma}$. We extend $\tilde{\varphi}$ to the whole $\mathbb{T}^n$ by setting it equal to $\varphi_0$ outside $D_{1/2}$. Then it is clear that the first return map is exactly $\tilde{R}_0$. 

If $\varphi_0$ is reversible, we define $\tilde{\sigma}$ by: 
$$\tilde{\sigma}(\alpha)=\left\{
\begin{aligned}
&\phi_2\circ \tilde{R}_0\circ \phi_1^{-1}(\alpha),& &\text{ if } \alpha\in\phi_1(\Pi^{-1}(K)); \\
&-\phi_1 \circ \tilde{R}^{-1} \circ \phi_2^{-1}(-\alpha), & &  \text{ if } \alpha\in -\phi_2(\Pi^{-1}(R_1(K))); \\
&\sigma(\alpha), & &\text{ otherwise.}
\end{aligned}
\right.$$
It is clear that $\tilde{\sigma}$ is symmetric. By Theorem \ref{thm3}, $\tilde{\varphi}$ can be chosen to be reversible. 
\end{proof}
\begin{remark}
The support of the resulting metric perturbation can be made small if the size of $K$ is small.
\end{remark}

\section{Twist maps, minimal configurations and rotation symbols}
\subsection{Twist maps and generating functions}
\begin{definition}[\cite{KH}]
$f: S^1\times (a,b) \rightarrow S^1\times (a,b)$ is \textit{an area-preserving twist map} if:

(i) $f$ is area  and  orientation preserving.

(ii) $f$ preserves boundary components in the sense that there exists an $\epsilon>0$ such that if $(x,y)\in S^1\times (a,a+\epsilon)$ then $f(x,y)\in S^1\times (a,\frac{a+b}{2})$.

(iii) if $F=(F_1,F_2)$ is a lift of $f$ to the universal cover of $\mathbb{R}\times(a,b)$ then $\partial F_1/\partial y>0$.

Here $(a,b)$ can be an open interval or the whole real line.
\end{definition}
If in addition to (i)-(iii) we have

(iv) $f$ twists infinitely at either end. Namely, for all $x\in S^1$ we have
$$\lim_{y\rightarrow a+}F_1(x,y)=-\infty, \lim_{y\rightarrow b-}F_1(x,y)=+\infty,$$
then we say $f$ is \textit{an area-preserving twist map with infinite twist}. The collection of all area-preserving twist maps with infinite twist from $S^1\times (a,b)$ to itself is denoted $IFT(a,b)$. 

Let $F:\mathbb{R}\times (a,b)\rightarrow \mathbb{R}\times (a,b)$ be a lift of $f\in IFT(a,b)$ to the universal cover.  The \textit{generating function} $h(x,x')$ is uniquely characterized by 
$$F(x,y)=(x',y') \Longleftrightarrow y=-\frac{\partial h}{\partial x}(x,x'), y'=\frac{\partial h}{\partial x'}(x,x').$$ 
Notice that if $h$ is $C^2$, the twist condition (iii) is equivalent to $h_{xx'}<0$.

\begin{example}
The map $f_0: S^1\times\mathbb{R}\rightarrow S^1\times\mathbb{R}$ defined by $f(x,y)=(x+y,y)$ is an area-preserving twist map with infinite twist. The generating function is given by
$$h_0(x,x')=\frac{(x'-x)^2}{2}.$$
\end{example}

\begin{example}
Define $f_1: S^1\times(-1,1)\rightarrow S^1\times(-1,1)$ by $f(x,y)=(x+\frac{y}{\sqrt{1-y^2}},y)$. Then $f_1\in IFT(-1,1)$ and the generating function is given by
$$h_1(x,x')=\sqrt{(x'-x)^2+1}.$$
\end{example}

Given a $f\in IFT(a,b)$, if the amount of twisting in (3) has a uniform lower bound $\beta$, then its generating function $h$ will satisfy all the following conditions $(H_1)-(H_{6\theta})$  with $\theta=\cot\beta$ \cite{M87}:
$$h(x,x')=h(x+1, x'+1). \leqno (H_1)$$
$$\lim_{|\xi|\rightarrow\infty}h(x,x+\xi)=+\infty, \text{ uniformly in }x. \leqno (H_2)$$
There exists a positive continuous function $\rho$ on $\mathbb{R}^2$ such that for $x<\xi, x'<\xi'$:
$$h(\xi,x')+h(x,\xi')-h(x,x')-h(\xi,\xi')\geq \int_x^{\xi}\int_{x'}^{\xi'}\rho. \leqno (H_5)$$
$$\left\{
\begin{aligned}
x & \rightarrow & \theta x^2/2-h(x,x') \text{  is convex for any } x'; \\
x' & \rightarrow & \theta x'^2/2-h(x,x') \text{  is convex for any } x.
\end{aligned}
\right. \leqno (H_{6\theta})$$
Here $\theta$ is a positive number. We say $h$ satisfies $(H_6)$ if it satisfies $(H_{6\theta})$ for some $\theta>0$. The conditions $(H_3)$ and $(H_4)$ from \cite{Ban88} can be derived from $(H_5)$ and $(H_6)$. If $h$ is $C^2$, then $(H_5)$ is equivalent to the twist condition $h_{xx'}<0$ and $(H_{6\theta})$ is equivalent to $h_{xx}, h_{x'x'}\leq \theta$. In the twist condition (iii), if $\partial F_1/\partial y$ has a lower bound $\beta$, then the generating function $h$ satisfies $(H_{6\theta})$ with $\theta=\cot\beta$. We use $\mathscr{H}_{\theta}$ to denote the collection of all continuous functions $h:\mathbb{R}^2\rightarrow \mathbb{R}$ satisfying $(H_1)-(H_{6\theta})$.

\subsection{Minimal configuration and rotation symbols}
We refer to \cite{Ban88}\cite{F}\cite{M87}\cite{M88} for the definitions and results we need in the sequel.

A \textit{configuration} is a bi-infinite sequence $\textbf{x}=(...,x_i,...)\in\mathbb{R}^{\mathbb{Z}}$ (with product topology of $\mathbb{R}^{\mathbb{Z}}$). The \textit{Aubry graph} of $\textbf{x}$ is the graph of the piecewise linear function $\Phi: \mathbb{R}\rightarrow\mathbb{R}$ determined by $\Phi(i)=x_i$ at every $i\in\mathbb{Z}$.

Suppose $h$ is a function on $\mathbb{R}^2$ satisfying $(H_1)-(H_6)$.  Define
$$h(x_j,...,x_k):=\sum_{i=j}^k h(x_i, x_{i+1}).$$

A segment $(x_j,...,x_k)$ is said to be \textit{minimal} (for $h$) if it is a minimizer for $h(x_j^*,...,x_k^*)$ with $x_j^*=x_j$ and $x_k^*=x_k$, A configuration is minimal if all its segments are minimal. We use $\mathscr{M}=\mathscr{M}_h$ to denote the set of all minimal configurations. The Aubry graphs of minimal configurations cross at most once. In the survey \cite{Ban88} Bangert shows how minimal geodesics on torus are related to minimal configurations.

A configuration $\textbf{x}'$ is a \textit{translate} of $\textbf{x}$ if there exist integers $j,k$ such that $x'_i=x_{i+j}+k$ for all $i$. We use the notation $T_{(a,b)}$ to denote the translation $T_{(a,b)}\textbf{x}=\textbf{x}'$ where $x'_i=x_{i-a}+b$. 

A translate of a minimal configuration is always minimal. A basic result of Aubry says that the set of translates of a minimal configuration is totally ordered with $\textbf{x}<\textbf{y}$ being defined to be $x_i<y_i$ for all integers $i$. Aubry's result implies that for any minimal configuration $\textbf{x}$, there is a number $\omega=\rho(\textbf{x})$, called the \textit{rotation number} of $\textbf{x}$, such that if $x'_i=x_{i+j}+k$ with $j>0$, then $\textbf{x}'>\textbf{x}$ (resp. $\textbf{x}'<\textbf{x}$) if $j\omega+k>0$ (resp. $j\omega+k<0$).

When $\rho(\textbf{x})$ is irrational, it is also called \textit{rotation symbols} $\tilde{\rho}(\textbf{x})$ of $\textbf{x}$. When $\rho(\textbf{x})=p/q\in\mathbb{Q}, q>0$, we investigate $x'_i=x_{i+q}-p$ i.e. $\textbf{x}'=T_{(-q,-p)}\textbf{x}$.  Notice that \textbf{x} may not be periodic even if $\rho(\textbf{x})$ is rational. We define 
$$\tilde{\rho}(\textbf{x})=\left\{
\begin{aligned}
(p/q) &+& \text{ if } \textbf{x}'>\textbf{x}; \\
p/q & &  \text{ if } \textbf{x}'=\textbf{x};\\
(p/q) &-& \text{ if } \textbf{x}'<\textbf{x}.
\end{aligned}
\right.$$
Namely, $\tilde{\rho}(\textbf{x})=(p/q)+$ (resp. $(p/q)-$) if the Aubry graph of $\textbf{x}'$ is strictly above (resp. below) that of $\textbf{x}$ (see also \cite{M88}). Since minimal configurations cross at most once, the Aubry graphs of $\textbf{x}$ and $\textbf{x}'$ do not cross if $\tilde{\rho}(\textbf{x})=\tilde{\rho}(\textbf{x}')=p/q$. 

\section{An extension of Mather's Destruction of invariant circle}

 Mather \cite{M88}  proved that for any Liouville number $\omega$  and a twist map in $IFT(-\infty,$ $+\infty)$ there exists a $C^{\infty}$-small perturbation with no invariant circle admitting rotation number $\omega$. But the perturbation of Mather is not compactly supported. Nevertheless, we get the following proposition by imitating Mather's construction.  It remains valid in higher dimensional cases under minor modification.

\begin{proposition}\label{mainprop2}
For any $f\in IFT(a,b)$ and any Liouville number $\omega$, we can find a $C^{\infty}$-small perturbation $\tilde{f}\in IFT(a,b)$ and a compact $K\subseteq S^1\times (a,b)$ such that $\tilde{f}-f$ has support $K$ and there is no $\tilde{f}$-invariant circle with rotation number $\omega$.
\end{proposition}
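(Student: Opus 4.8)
The plan is to follow Mather's strategy from \cite{M88} but to keep careful track of supports, so that the perturbation is localized to a compact annulus. First I would reduce to the case where $f$ already twists infinitely and has a convenient normal form near a chosen rotation number: fix a rational $p/q$ close to $\omega$ (in fact a whole sequence $p_n/q_n\to\omega$ converging very fast, which is exactly what "Liouville" provides), and restrict attention to a thin invariant-under-$f$-modulo-error annulus $A_n=S^1\times(y_n^-,y_n^+)$ whose interior contains the circle (if any) of rotation number $p_n/q_n$. On such an annulus one works with the generating function $h$ (which lies in some $\mathscr{H}_\theta$ by the discussion preceding the statement) and perturbs $h$ rather than $f$ directly; a perturbation $h+\delta$ with $\delta$ supported in a compact region of $\mathbb{R}^2$ and small in $C^\infty$ corresponds, via the twist condition $h_{xx'}<0$ being an open condition, to a perturbation $\tilde f\in IFT(a,b)$ with $\tilde f-f$ supported in a compact $K\subseteq S^1\times(a,b)$.

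The heart of the argument is Mather's mechanism for destroying the invariant circle of rotation number $\omega$: one shows that if such a circle persisted under the perturbation, then by the crossing property of minimal configurations (Aubry graphs of minimals cross at most once) and the ordering of translates, the circle would have to carry minimal configurations of rotation symbol $\omega$ that are squeezed between the periodic minimal configurations of rotation symbols $(p_n/q_n)+$ and $(p_n/q_n)-$. By making the perturbation $\delta$ push the action of the $q_n$-periodic minimal orbits in the "wrong" direction — lowering $h(x_0,\dots,x_{q_n})-h(\text{shift})$ below the value forced on an invariant-circle orbit — one produces, for each $n$, a definite gap: the minimal periodic configurations with symbols $(p_n/q_n)\pm$ are no longer adjacent, i.e.\ an Aubry–Mather Cantor set appears at level $p_n/q_n$ with a gap of size bounded below independently of how the would-be invariant circle sits. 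Since $\omega$ is approximated by the $p_n/q_n$ faster than the perturbation sizes shrink (this is where Liouville is essential, exactly as in Mather's quantitative estimate), these gaps accumulate on the circle of rotation number $\omega$ and rule out its existence. I would assemble the total perturbation as a sum $\sum_n \delta_n$ where $\delta_n$ is supported in a region shrinking toward the level set $\{y=y(\omega)\}$ and with $C^k$-norms summable for every $k$; the union of the supports is then contained in a single compact $K$ (one may even arrange $K$ to be a small neighborhood of that level circle), and $\tilde f = f$ outside $K$.

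The main obstacle — and the place where real work beyond bookkeeping is needed — is the quantitative step: showing that finitely (or countably) many compactly supported bumps of controlled size actually destroy \emph{all} invariant circles of rotation number exactly $\omega$, not merely shift them. Mather's original proof handles the non-compact case by perturbing near infinitely many rational levels with a globally defined (non-compactly-supported) function; to compactify, I must confine every bump to a neighborhood of the single level $y(\omega)$ and still extract a uniform-in-$n$ obstruction. This requires re-deriving Mather's action estimates relative to $h$ restricted to the thin annulus $A_n$, checking that conditions $(H_1)$–$(H_{6\theta})$ are preserved under the localized perturbation (so that the Aubry–Mather machinery still applies), and verifying that the minimal configurations of symbol $(p_n/q_n)\pm$ genuinely separate after the bump. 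The twist-with-infinite-twist hypothesis ($f\in IFT(a,b)$) guarantees that orbits of every rotation number in a neighborhood of $y(\omega)$ exist to begin with, which is what makes the localization possible; the convergence statement of the proposition then follows since each $\delta_n$, hence $\tilde f$, depends continuously on the chosen amplitudes, which we send to $0$.
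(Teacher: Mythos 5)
Your overall architecture (perturb the generating function near a rational $p/q$ close to $\omega$, localize the bump so that the twist relation $y=-h_x(x,x')$ turns a compactly supported change of $h$ into a compactly supported change of $f$, and use the Liouville property to make $p/q$ astronomically close to $\omega$) matches the paper's, and your localization idea is essentially the paper's one innovation over Mather: the paper multiplies Mather's bump $u(x+i)$ by a cutoff $v(x'+i)$ supported in an interval of length $\le 3/q$ in the $x'$ variable, which confines the support in $y$. But there is a genuine gap at the decisive step. The paper (following \cite{M88}) works with the Peierls barrier: a first perturbation $h'=h+\sum_i u(x+i)v(x'+i)$ makes $P_{p/q,h'}>0$ on an interval $J$, a second perturbation $\tilde h=h'+\sum_i w(x+i)v(x'+i)$ makes $P_{(p/q)+,\tilde h}>0$ on $I\subseteq J$, and then Mather's modulus-of-continuity theorem for the barrier as a function of the rotation \emph{symbol} (Theorem 2.2 of \cite{M88}) transfers this positivity to $P_{\omega,\tilde h}>0$ at some point, because the Liouville condition lets one choose $p/q$ with $|\omega-p/q|$ small enough relative to the ($q$-dependent) modulus. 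Positivity of $P_{\omega,\tilde h}$ somewhere is what excludes the invariant circle of rotation number $\omega$. Your substitute for this step --- destroying circles at a sequence $p_n/q_n\to\omega$ and asserting that the resulting gaps ``accumulate on the circle of rotation number $\omega$ and rule out its existence'' --- is not a valid inference: the absence of invariant circles at rotation numbers near $\omega$ (which is generic at rationals even without any perturbation) says nothing about the circle of rotation number $\omega$ itself unless one has a quantitative comparison between the Mather sets (equivalently, the barriers) at the symbols $(p/q)\pm$ and at $\omega$. That comparison is exactly the missing ingredient, and you acknowledge as much when you defer ``the quantitative step'' as the main obstacle.

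Two smaller discrepancies: the paper uses a \emph{single} well-chosen rational and a two-stage perturbation, not an infinite sum $\sum_n\delta_n$ over a sequence of rationals (the latter creates an extra, unnecessary summability-of-supports issue, since the supports in $y$ need not shrink to a single level circle); and the obstruction is formulated through the barrier $P_{(p/q)+}$, i.e.\ through minimal configurations with rotation \emph{symbol} $(p/q)+$, not merely through separating the periodic minimizers of symbols $(p/q)\pm$. If you replace your accumulation argument by the barrier argument and its modulus of continuity, the rest of your outline goes through.
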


\begin{proof}
We will mainly manifest what modification we make on Mather's construction. Our aim is to prove that for any $r\geq 1$,  one can make a $C^{r+1}$-small perturbation $\tilde{h}$ on the generating function $h$ so that the Peierls' barrier $P_{\omega,\tilde{h}}$ (cf. \cite{M88}) is not vanishing everywhere, which is sufficient to show the absence of $\tilde{f}$-invariant circle with rotation number $\omega$. Notice that $h$ satisfies $(H_1)-(H_5)$. Since we only make perturbation near the invariant circle of $h$ with rotation number $\omega$, we may assume $h\in \mathscr{H}_{\theta}$ for some $\theta$.

The general idea is to firstly choose a rational number $p/q$ close to $\omega$ (we may assume $p/q<\omega$),  and make a $C^{r+1}$-small perturbation $h'$ on the generating function $h$ so that the Peierls' barrier $P_{p/q,h'}$ is positive in some interval $J$ to eliminate the minimal configurations through this interval with rotational symbol $p/q$. Secondly we make an additional $C^{r+1}$-small perturbation $\tilde{h}$ on $h'$ so that $P_{p/q+,h'}$ (or $P_{p/q-,h'}$ if $p/q>\omega$) is positive in an interval $I\subseteq J$. By the modulus of continuity formula  \cite[Theorem 2.2]{M88},  as $p/q$ is sufficiently close to $\omega$ (we can choose such $p/q$ since $\omega$ is Liouville), $P_{\omega,\tilde{h}}$ is positive at some point in $I$ and it finishes the proof.

We now explain how to construct the perturbation of $h$ when $\omega>p/q$.  Suppose $\textbf{x}$ is a minimal configuration in $\mathscr{M}_{p/q}$. Choose an interval $J$ with length$\geq q^{-1}$ in the complement to the set $\{x_i+j\}_{i,j\in\mathbb{Z}}$. Without loss of generality we may assume $J=(x_j, x_k+m)$ for some $j,k,m\in\mathbb{Z}$. For any $\epsilon>0$ and any integer $r\geq 1$, we choose a $C^{\infty}$ nonnegative function $u$ on $\mathbb{R}$ with the following properties:

(a) $u$ has support $\bar{J}$;

(b) $||u||_{C^{r+1}}\leq \epsilon/2$;

(c) $u(\xi)\geq C_1(r)\epsilon/q^{r+1}$, for $\xi\in J'$, here $J'$ is the middle third of $J$ and $C_1(r)$ is a constant depending only on $r$.

Here is how to construct such a function: Define a function $\Psi: \mathbb{R}\rightarrow\mathbb{R}$ by

$$\Psi(t)=\left\{
\begin{aligned}
& \exp\left(\frac{1}{t^2-1}\right),& &\text{ for } |t|<1 \\
& 0,& &\text{ otherwise.}
\end{aligned}
\right.$$

Denote $C_0(r):=||\Psi||_{C^{r+1}}$. Define a function $u_0$ by
$$u_0(t)=\frac{\epsilon}{2^{r+2}q^{r+1}C_0(r)}\Psi(2qt).$$
and let
$$C_1(r)=\Psi(1/3)2^{-r-2}C_0(r)^{-1}.$$
It is not hard to check that $u_0$ satisfies (a)-(c) for $J=(-1/2q,1/2q)$. For a general $J$, we have only to move and rescale $u_0$.

 Define a function $v$ on $\mathbb{R}$ by
$$v(t)=\left\{
\begin{aligned}
& C_2(r)q^{-r-1}\Psi(2q(t-x_{j+1})),& &\text{ for } t\in[x_{j+1}-1/2q, x_{j+1}); \\
& C_2(r)q^{-r-1}\Psi(0),& &\text{ for } t\in[x_{j+1}, x_{k+1}+m); \\
& C_2(r)q^{-r-1}\Psi(2q(t-x_{k+1}-m)),& &\text{ for } t\in [x_{k+1}+m, x_{k+1}+m+1/2q); \\
& 0,& &\text{ otherwise, }
\end{aligned}
\right.$$
where $C_2(r)=2^{-r-1}C_0(r)^{-1}$. Note that $v$ is nonnegative, $C^{\infty}$, supported by an interval with length $\leq 3/q$ and $||v||_{C^{r+1}}=1$.

Now we make a first perturbation on $h$:
$$h'(x,x')=h(x,x')+\sum_{i\in\mathbb{Z}}u(x+i)v(x'+i).$$

We construct $w$ as in \cite{M88}, and set
$$\tilde{h}(x,x')=h'(x,x')+\sum_{i\in\mathbb{Z}}w(x+i)v(x'+i).$$
By a minor modification of the rest of the proof in \cite{M88}, we get a proof of Proposition \ref{mainprop2}.

\end{proof}

\section{Proof of Theorem \ref{thm1}}

We use the setting in Section \ref{sec: perturb_torus} and assume that $\varphi_0(\partial/\partial q_1)=1$,. Namely, the line $t\mapsto (t, q_2)$ is a geodesic for any $q_2$. Denote by $\Lambda:=\varphi_0(-\partial/\partial q_1)$. $\Lambda$ may not be equal to 1 if $\varphi$ is  not reversible. We have $U_p=(-\Lambda, 1)$ and $R_1: S^1\times (-\Lambda,1) \rightarrow S^1\times (-\Lambda,1)$ is given by
$$R_1(q_1, p_1)=(q_1+f'_0(p_1), p_1).$$

\begin{lemma}\label{lem1}
$R_1\in IFT(-\Lambda,1)$. Moreover, $f''_0$ has a positive lower bound.
\end{lemma}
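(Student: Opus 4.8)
The plan is to verify conditions (i)--(iv) of the definition of $IFT(-\Lambda,1)$ directly for the map $R_1(q_1,p_1)=(q_1+f_0'(p_1),p_1)$, using the explicit description of $f_0$ coming from the flat Finsler metric. Recall from Section 2.4 that $f_0$ is characterized by $p_2=-f_0(p_1)$ on the unit cotangent sphere together with $f_0'(p_1)=\Psi_1/\Psi_2$ evaluated at $(p_1,f_0(p_1))$, where $\Psi=(\varphi_0^*)^2/2$. Since $R_1$ is the identity in the $p_1$-variable and shifts $q_1$ by a function of $p_1$ alone, it is trivially area- and orientation-preserving, so (i) is immediate; and a lift $F$ has $F_1(q_1,p_1)=q_1+f_0'(p_1)$, so $\partial F_1/\partial p_1=f_0''(p_1)$. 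Thus conditions (iii) and (iv), as well as the ``moreover'' statement, all reduce to a single analytic fact: $f_0''>0$ on $(-\Lambda,1)$, with a positive lower bound, and $f_0'(p_1)\to -\infty$ as $p_1\to -\Lambda+$ and $f_0'(p_1)\to +\infty$ as $p_1\to 1-$. Condition (ii) (preservation of boundary components) then follows from (iv) in the standard way.

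The heart of the matter is therefore the convexity and blow-up of $f_0$, which I would extract from the quadratic convexity of the dual norm $\varphi_0^*$. The level set $\{\varphi_0^*=1\}$ is a smooth, strictly convex closed curve in the $(p_1,p_2)$-plane; the graph $p_2=-f_0(p_1)$ is precisely the lower arc of this curve on which $\Psi_2=\partial\Psi/\partial p_2>0$, i.e. the portion where the outward normal has positive $\dot q_2=\Psi_2$ component, parametrized by $p_1$. Strict convexity of the indicatrix curve means exactly that this arc, written as a graph over the $p_1$-axis, is a strictly convex function with nonvanishing second derivative; since the arc is compact (as $p_1$ ranges over the closed interval $[-\Lambda,1]$ corresponding to the two points where the tangent to the indicatrix is vertical), the second derivative $f_0''$ attains a positive minimum there. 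This gives both $f_0''>0$ with a positive lower bound and, by integrating, condition (iii). For the infinite-twist endpoints, note that $p_1=1$ and $p_1=-\Lambda$ are the values where the tangent line to $\{\varphi_0^*=1\}$ is vertical — these are the points where $\Psi_2=0$, i.e. where the geodesic direction $(\dot q_1,\dot q_2)=(\Psi_1,\Psi_2)$ becomes parallel to $\partial/\partial q_1$; as $p_1$ approaches these endpoints the slope $dp_2/dp_1=-f_0'(p_1)$ of the indicatrix tends to $\pm\infty$, so $f_0'(p_1)\to\mp\infty$, which is (iv).

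Concretely, I would argue as follows. First identify the domain: the assumption that $t\mapsto(t,q_2)$ is a unit-speed geodesic means $\partial/\partial q_1$ is a unit vector, so its Legendre dual is a unit covector with $p_1$-coordinate equal to $1$; likewise $-\partial/\partial q_1$ dualizes to the unit covector with $p_1$-coordinate $-\Lambda$. Hence as we move along the upper arc $\{\Psi_2>0\}$ of the indicatrix, $p_1$ sweeps exactly the open interval $(-\Lambda,1)$, confirming $U_p=(-\Lambda,1)$. Next, differentiate the defining relation $\Psi(p_1,f_0(p_1))=1/2$ once to recover $f_0'=-\Psi_1/\Psi_2$ (consistent with the formula in Section 2.4, up to the sign convention built into $p_2=-f_0$), and a second time to express $f_0''$ in terms of the Hessian of $\Psi$ restricted to the tangent direction of the level curve; quadratic convexity of $\varphi_0^*$ (equivalently, positive-definiteness of $D^2\Psi$ on the level set) makes this quantity strictly positive, and compactness of the closed arc $\{\varphi_0^*=1,\ \Psi_2\ge 0\}$ yields the uniform lower bound. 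Finally, as $p_1\to 1-$ we have $\Psi_2\to 0$ while $\Psi_1$ stays bounded away from $0$ (the indicatrix is strictly convex, so the vertical-tangent point is isolated and $\Psi_1\ne 0$ there), whence $f_0'=-\Psi_1/\Psi_2\to +\infty$; symmetrically $f_0'\to -\infty$ as $p_1\to -\Lambda+$.

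The main obstacle I anticipate is purely bookkeeping: pinning down the precise sign conventions relating $p_2$, $f_0$, and the normalization of the dual norm so that ``$f_0''>0$'' comes out with the correct sign matching condition (iii)'s requirement $\partial F_1/\partial y>0$, and checking that the endpoints of $U_p$ are genuinely $-\Lambda$ and $1$ and that they correspond to the vertical-tangent points of the indicatrix. The geometric content — strict convexity of a smooth closed curve forces its graph over a coordinate axis to be uniformly convex on any closed sub-arc avoiding the vertical-tangent points, and forces the slope to blow up as one approaches those points — is elementary, so once the conventions are fixed the proof is short.
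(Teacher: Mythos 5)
Your plan is correct and follows essentially the same route as the paper: implicitly differentiate the level-set relation $\Psi(p_1,-f_0(p_1))=\mathrm{const}$ to get $f_0'=\Psi_1/\Psi_2$ and then use quadratic convexity (positive definiteness of $D^2\Psi$, together with the bound $0<\Psi_2\le\delta_2$) for the uniform lower bound on $f_0''$, and use $\Psi_2\to 0^+$ with $\Psi_1\to 1$ (resp. $-1/\Lambda$) at the endpoints for the infinite twist. One small caution: $f_0''$ is not a continuous function on the closed interval $[-\Lambda,1]$ (it blows up at the vertical-tangent endpoints), so do not phrase the lower bound as ``$f_0''$ attains its minimum on the compact arc''; the correct compactness inputs are the eigenvalue bound for $D^2\Psi$ and the upper bound on $\Psi_2$ over the indicatrix, exactly as in your concrete second paragraph.
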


\begin{proof}
We verify the (i)-(iv) in Definition 3.1. (i) and (ii) are clear. Since $\Psi(p_1, -f_0(p_1))$ $\equiv 1,$ by taking derivative with respect to $p_1$, we have
$$\Psi_1-f'_0\Psi_2=0.$$

On the unit circle of $\varphi^*_0$, $(\Psi_1, \Psi_2)(p_1, p_2)=\mathscr{L}^{-1}(p_1, p_2)\in S$, where $S$ is the unit circle of $\varphi_0$, thus $\Psi_1, \Psi_2$ are both bounded. Notice that $p_1\in(-\Lambda, 1)$. As $p_1$ approaches to either ends, $\Psi_2$ goes to $0$ from above. When $p_1\to-\Lambda$, $\Psi_1\to -1/\Lambda$ thus $f'_0\to -\infty$. On the other side, as $p_1\to 1$, we have $\Psi_1\to 1$ and $f'_0\to +\infty$. Hence $R_1$ satisfies (iv).

By taking the second derivative with respect to $p_1$, we have
$$\Psi_{11}-f''_0\Psi_2+(f'_0)^2\Psi_{22}=0.$$
Since $\varphi_0$ is Finsler metric, $\Psi_{11}, \Psi_{22}\geq \delta_1>0$. On $\Gamma_0$ we have $0<\Psi_2\leq \delta_2$, hence $f''_0\geq \delta_1/\delta_2>0$, which implies (iii). 
\end{proof}

Define a function $h:\mathbb{R}^2\to\mathbb{R}$ by
$$h(q, q')=d_{\varphi_0}((q, 0), (q',1)).$$
\begin{lemma}\label{lem2}
$h$ is a generating function of $R_1$, therefore $h\in\mathscr{H}_{\theta}$ for some $\theta$.
\end{lemma}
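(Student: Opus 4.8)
The plan is to show that $h(q,q') = d_{\varphi_0}((q,0),(q',1))$ is a generating function for the lift $F$ of $R_1$, i.e. that the relation $F(q,p) = (q',p')$ is equivalent to $p = -\partial_q h(q,q')$ and $p' = \partial_{q'} h(q,q')$, and then invoke Lemma \ref{lem1} together with the discussion in Section 3 to conclude $h \in \mathscr{H}_\theta$. First I would set up the unperturbed picture explicitly: in the flat torus $(\mathbb{T}^2,\varphi_0)$ the geodesics are straight lines, so the geodesic through $(q,0)$ with momentum $p_1 = p$ hits the level $q_2 = 1$ at the point $q' = q + f_0'(p)$ (unwrapping to $\mathbb R$), which is exactly the $q$-component of $R_1(q,p)$, and the momentum is unchanged, $p' = p$. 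Thus the Poincar\'e correspondence $F(q,p)=(q',p')$ is encoded by the single equation $q' - q = f_0'(p)$ with $p'=p$.

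Next I would identify $h(q,q')$ with the natural action/generating function $L(q'-q)$ of this correspondence. Since the straight segment from $(q,0)$ to $(q',1)$ realizes the distance, and by homogeneity of $\varphi_0$ we have $d_{\varphi_0}((q,0),(q',1)) = \varphi_0(q'-q, 1) =: L(q'-q)$ for a smooth convex function $L$ of one variable (here I use that the segment is minimizing, which holds because $\varphi_0$ is flat — geodesics are length-minimizing in a flat torus, at least for the short vertical displacement, and this is where I should be careful about branch choices and the fact that distance on the torus is an infimum over homotopy classes; I would restrict attention to the lift / the relevant strip so that the straight segment is the minimizer). Then $\partial_{q'} h = L'(q'-q)$ and $\partial_q h = -L'(q'-q)$, so the generating-function relations read $p = L'(q'-q)$ and $p' = L'(q'-q) = p$. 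It remains to check that $p = L'(q'-q)$ is the same as $q'-q = f_0'(p)$, i.e. that $L'$ and $f_0'$ are inverse functions. This is precisely the Legendre duality between the Lagrangian $\varphi_0^2/2$ (giving $\Psi = (\varphi_0^*)^2/2$) and its restriction to the section: differentiating $\Psi(p_1,-f_0(p_1))\equiv 1$ as in Lemma \ref{lem1} gave $\Psi_1 = f_0'\Psi_2$, and one checks this says exactly that the slope $q'-q$ of the minimizing segment equals $f_0'(p_1)$, since the minimizing direction for $\varphi_0(\cdot,1)$ dualizes to the covector $(p_1,p_2)$ on $UT^*$. So $L' = (f_0')^{-1}$, as needed.

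Finally, having established that $h$ is a $C^2$ generating function of $R_1 \in IFT(-\Lambda,1)$, I would quote the equivalences recorded after the $(H_1)$--$(H_{6\theta})$ list in Section 3: $(H_1)$ is $\mathbb Z$-periodicity $h(q+1,q'+1)=h(q,q')$, immediate from translation invariance of $d_{\varphi_0}$; $(H_2)$ follows since $L(\xi)=\varphi_0(\xi,1)\to+\infty$ as $|\xi|\to\infty$; the twist/$(H_5)$ condition $h_{qq'}<0$ is equivalent to condition (iii) of the twist map, which is exactly the positive lower bound on $f_0''$ from Lemma \ref{lem1} (indeed $h_{qq'} = -L''(q'-q) = -(f_0')^{-1}{}'$, negative since $f_0'' > 0$); and $(H_{6\theta})$, i.e. $h_{qq},h_{q'q'}\le\theta$, holds because $h_{qq}=h_{q'q'}=L''$ is bounded above — equivalently $f_0''$ has a positive lower bound, so its reciprocal type quantity $L''$ is bounded — on the relevant range, giving a finite $\theta$. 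Hence $h \in \mathscr{H}_\theta$ for that $\theta$.

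The main obstacle I anticipate is the bookkeeping around the distance function on the torus versus on the strip: $d_{\varphi_0}((q,0),(q',1))$ as literally written is the torus distance, an infimum over integer translates of $q'$, and is not smooth globally, whereas what is really wanted is the branch corresponding to the straight segment in the universal cover (or in the strip $0\le q_2\le 1$). I would address this by working with the lift from the start, defining $h$ on $\mathbb R^2$ via the length of the straight segment $t\mapsto((1-t)q + tq', t)$, noting this agrees with $d_{\varphi_0}$ locally (hence is the correct generating function of the lift $F$), and observing that all the $(H_i)$ only need $h$ on $\mathbb R^2$ with the $\mathbb Z$-periodicity built in. Everything else is a routine unwinding of Legendre duality already half-done in Lemma \ref{lem1}.
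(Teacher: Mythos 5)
Your proposal is correct and follows essentially the same route as the paper: the paper likewise observes that $h$ depends only on $q'-q$ and identifies its partial derivatives with the $p_1$-components of the endpoint covectors via the first variation formula (which, for the flat metric, is exactly the Legendre-duality computation $L'=(f_0')^{-1}$ you carry out), and then deduces $h\in\mathscr{H}_{\theta}$ from Lemma \ref{lem1} together with the criteria recorded in Section 3. Your explicit treatment of the lift-versus-torus-distance issue and your direct verification of $(H_1)$--$(H_{6\theta})$ simply spell out details the paper leaves implicit.
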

\begin{proof}
Notice that $h$ only depends on $q'-q$, hence $-h_q=h_{q'}$. Let $\gamma: [a, b]\to \mathbb{R}^2$ be the geodesic from $(q, 0)$ to $(q',1)$. By the first variation formula, $h_{q'}=\mathscr{L}(\gamma'(b))(\partial/\partial q_1)$, which is the $p_1$ coordinate of $\mathscr{L}(\gamma'(b))\in UT^*_{(q',1)}\mathbb{R}^2$. 
\end{proof}

\begin{proof}[Proof of Theorem \ref{thm1}]
For any Liouville number $\omega$ and any $\epsilon>0, r\geq 1$, choose $p/q$ sufficiently close to $\omega$ as in the proof of Proposition \ref{mainprop2}. We take $J=(-1/2q, 1/2q)$ and construct $\tilde{h}:\mathbb{R}^2\rightarrow\mathbb{R}$ with $||\tilde{h}-h||_{C^{r+1}}\leq \epsilon$ and the twist map $\tilde{R}_1\in IFT(-\Lambda, 1)$ associated to $\tilde{h}$ has no invariant circle with rotational number $\omega$. From Aubry-Mather theory, the absence of $\tilde{R}_1$-invariant circle implies the existence of a minimal $\tilde{R}_1$-invariant Cantor set whose projection to $S$ is also Cantor.

Let $K$ be the support of $\tilde{R}_1-R_1$. From the construction in the proof of \ref{mainprop2} we know that $\Pi_q(K)=B(1/2q)$ and $\Pi_q(R_1(K))\subseteq B(\omega+\frac{3}{q})\subseteq B(2q)$ for large $q$.  By Proposition \ref{mainprop1} there exists a  Finsler metric $\tilde{\varphi}$ on $\mathbb{T}^2$ such that the Poincar\'{e} map of the geodesic flow is $\Pi^{-1}\circ \tilde{R}_1\circ \Pi$. The $\tilde{R}_1$-invariant Cantor set with rotation number $\omega$ implies the existence of a nondense geodesic with rotation vector $(\omega, 1)$. 

When $\varphi_0$ is reversible, the perturbation of the dual lens map can be made reversible, thus the reversible version of Proposition \ref{mainprop1} can be applied to get reversible perturbation of $\varphi_0$ with the desired nondense geodesic. This finishes the proof of Theorem \ref{thm1}.
\end{proof}

\begin{remark}\label{rem: conj_pts}
If a Finsler 2-torus has no conjugate points, any Liouville number $\omega$ gives us a foliation of the torus whose leaves are geodesics with the same rotation vector colinear to $(\omega,1)$, hence the Peierls' barrier $P_{\omega}\equiv 0$.  Since in our example,  $P_{\omega}$ is not vanishing everywhere,  the Finsler torus we get in Theorem \ref{thm1} do have conjugate points even though it is almost flat.
\end{remark}

\end{document}